\newtheorem{theorem}{Theorem}
\newtheorem{lemma}[theorem]{Lemma}
\newtheorem{observation}[theorem]{Observation}
\theoremstyle{definition}
\newtheorem{definition}[theorem]{Definition}
\newtheorem*{question}{Question}
\newtheorem{remark}[theorem]{Remark}
\newcommand{\thickhline}{%
    \noalign {\ifnum 0=`}\fi \hrule height 1pt
    \futurelet \reserved@a \@xhline
}
\newcolumntype{"}{@{\hskip\tabcolsep\vrule width 2pt\hskip\tabcolsep}}
\def\Zd{\left(\mathbb{Z}^+\right)^d}
\def\Zk{\left(\mathbb{Z}^+\right)^k}
\def\c{\mathbf{c}}
\def\s{\mathbf{s}}
\def\Z{\mathbb{Z}^+}
\newcommand{\xx}[1]{\mathbf x_{#1}}
\begin{document}

\begin{frontmatter}[classification=text]

  \title{A Multidimensional Rado Theorem}
  \author[AR]{Aaron Robertson}

\begin{abstract}
We extend Deuber's theorem on $(m,p,c)$-sets to hold over the
multidimensional positive integer lattices.  This
leads to a multidimensional Rado theorem where we are guaranteed
monochromatic multidimensional points in all finite colorings of
$\Zd$ where the $i^{\mathrm{th}}$ set of coordinates satisfies
the $i^{\mathrm{th}}$ given linear Rado system.
\end{abstract}

\end{frontmatter}

%\subjclass[2020]{05D10}

%\date{\today}

%\maketitle

\section{Introduction}

Rado's Theorem \cite{Rado} completely characterizes which linear systems
have the property that any finite coloring of $\Z$ admits a monochromatic
solution to the linear system.  In some sense, this makes such a linear
system ``unbreakable" via partitioning.  This characterization of linear systems
is given in the following definition.

\begin{definition}[Columns Condition]\label{colcond} Let $A$ be an $\ell \times k$ matrix with column
vectors $\c_1, \c_2,\dots,\c_k$. We say that $A$ satisfies the
{\it columns condition} if, after renumbering if necessary, there
exist indices $i_0=0 <i_1< i_2 < \cdots <i_m = k$ such that,
for
$$
 \s_j = \sum_{i=i_{j-1}+1}^{i_j} \c_i, \qquad 1 \leq j \leq m,
$$
the following hold:

\begin{enumerate}
\item $\s_1=\mathbf{0} \in \mathbb{Z}^\ell$;

\item for $2 \leq j \leq m$ we can write $\s_j$ as a linear
combination over $\mathbb{Q}$ of $\c_1,\dots,\c_{i_{j-1}}$.
\end{enumerate}
\end{definition}

With this definition, we can state Rado's Theorem.

\begin{theorem}[Rado's Theorem \cite{Rado}]\label{RadoFullThm} Let $r \in \mathbb{Z}^+$ and let
$A\mathbf{v} = \mathbf{0}$ be a system of equations, where $A$ is a matrix with integer coefficients.  Any $r$-coloring of $\mathbb{Z}^+$ admits
a monochromatic solution to the system if and only if $A$ satisfies the columns condition.
\end{theorem}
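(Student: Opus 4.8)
The plan is to prove the two implications separately: partition regularity $\Rightarrow$ columns condition (necessity), and columns condition $\Rightarrow$ partition regularity (sufficiency). Throughout, write $A$ with columns $\mathbf{c}_1,\dots,\mathbf{c}_k$, so that $A\mathbf{v}=\mathbf{0}$ becomes the single vector equation $\sum_{i=1}^k v_i\,\mathbf{c}_i = \mathbf{0}$.

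For necessity I would argue contrapositively, producing for each large prime $p$ an explicit finite coloring and then reading the columns condition off any monochromatic solution. Fix a prime $p$ and color $n\in\Z$ by its last nonzero base-$p$ digit; equivalently, writing $n = p^{a}u$ with $p\nmid u$, set $\chi_p(n) = u \bmod p \in \{1,\dots,p-1\}$, a coloring with $p-1$ colors. Given a monochromatic solution $v_1,\dots,v_k$ of common color $d$, I would sort the indices by the $p$-adic valuation $a_i$ of $v_i$ into blocks $I_1,\dots,I_m$ of strictly increasing valuation. Dividing the equation by the smallest power $p^{a^{(1)}}$ and reducing modulo $p$ annihilates every term outside $I_1$ and leaves $d\sum_{i\in I_1}\mathbf{c}_i \equiv \mathbf{0} \pmod p$; since $d\not\equiv 0$, the integer vector $\sum_{i\in I_1}\mathbf{c}_i$ is divisible by $p$, and choosing $p$ larger than all entries of all partial column sums forces $\sum_{i\in I_1}\mathbf{c}_i = \mathbf{0}$. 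Iterating over the successive blocks expresses each later block sum, modulo $p$, as an $\mathbb{F}_p$-combination of the earlier columns; taking $p$ beyond all relevant minors makes $\mathbb{F}_p$-rank equal to $\mathbb{Q}$-rank, so this lifts to a genuine rational linear combination. Reindexing by the blocks $I_1,\dots,I_m$ then yields exactly Definition \ref{colcond}.

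For sufficiency I would route through Deuber's $(m,p,c)$-sets, the objects this paper is built around. The argument has two ingredients. First, a structural lemma: if $A$ satisfies the columns condition, then there are parameters $m,p,c$ — with $c$ clearing the denominators of the rational coefficients in part (2) of Definition \ref{colcond} and $p$ bounding their numerators — such that every $(m,p,c)$-set contains a solution of $A\mathbf{v}=\mathbf{0}$; here the block decomposition $\mathbf{s}_1,\dots,\mathbf{s}_m$ is used directly to express the entries of $\mathbf{v}$ as the prescribed combinations of the generators $x_0,\dots,x_m$. Second, the partition regularity of $(m,p,c)$-sets: for every $m,p,c$ and every $r$ there exist $M,P,C$ such that any $r$-coloring of an $(M,P,C)$-set yields a monochromatic $(m,p,c)$-set. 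Granting both, a given $r$-coloring of $\Z$ is applied to any $(M,P,C)$-set, producing a monochromatic $(m,p,c)$-set, inside which the structural lemma locates a monochromatic solution.

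The main obstacle is the partition regularity of $(m,p,c)$-sets, which is the technical heart and is proved by induction on $m$ (with an inner van der Waerden / Hales–Jewett–type step to handle the bounded coefficients $|\lambda_i|\le p$); this is precisely the statement whose multidimensional analogue the paper sets out to establish, so in the present context I would cite it and concentrate the genuinely new work there. The only delicate point in the necessity direction is the passage from congruences modulo a single prime to honest rational relations, which is handled uniformly by choosing $p$ larger than every minor and every partial column sum that appears.
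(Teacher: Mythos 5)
The paper does not prove this statement: Rado's Theorem is quoted from \cite{Rado} and used as a black box, so there is no in-paper proof to compare against. Your outline is the standard proof and is essentially sound. Your sufficiency direction coincides exactly with the route the paper itself gestures at in Section 3: the columns condition yields $p,c$ such that \emph{every} $(m,p,c)$-set contains a solution of $A\mathbf{v}=\mathbf{0}$, and Deuber's Theorem (Theorem \ref{DeuberThm} here) makes $(m,p,c)$-sets partition regular over $\Z$, whence a monochromatic solution. Your necessity direction, via the last-nonzero-base-$p$-digit coloring, is the classical argument, but one step is glossed in a way that would fail if taken literally: for a later block $I_j$, dividing the equation by $p^{a^{(j)}}$ leaves the terms from \emph{earlier} blocks with negative powers of $p$, so you cannot simply ``reduce modulo $p$'' to express $\mathbf{s}_j$ as an $\mathbb{F}_p$-combination of the earlier columns. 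The standard repair is dual: take any integer vector $\mathbf{w}$ orthogonal (over $\mathbb{Q}$) to the columns in the earlier blocks, pair it with the equation, divide the resulting scalar identity by $p^{a^{(j)}}$, and conclude $p \mid d\,\langle \mathbf{w},\mathbf{s}_j\rangle$, hence $\langle \mathbf{w},\mathbf{s}_j\rangle=0$ for $p$ large; running over a basis of the orthogonal complement gives $\mathbf{s}_j\in\mathrm{span}_{\mathbb{Q}}\{\mathbf{c}_i: i \in I_1\cup\cdots\cup I_{j-1}\}$. One must also note that the block partition depends on the solution, but since there are only finitely many ordered partitions of the index set, a single prime $p$ chosen large relative to all of them (or a partition recurring for infinitely many primes) suffices. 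These are routine repairs rather than gaps in the idea, and the rest of your argument is the accepted proof.
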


A linear system $A\mathbf{v}=\mathbf{0}$ for which $A$ satisfies the columns condition
will be referred to as being a {\it Rado system}.  For example, 
$A=[1\,\,1 \dots 1\,\, -1]$, which corresponds to the single-equation system
$\sum_{i=1}^{k-1}x_i=x_k$, is a Rado system and is often referred to as
the generalized Schur equation (Schur's Theorem, which predates Rado's result, deals with
the case $x_1+x_2=x_3$).

Recently, Balaji, Lott, and Rice \cite{BLR} extended the generalized Schur's Theorem (see \cite{Schur}, \cite{BB}, \cite{RS})  to hold over  
multidimensional positive integer lattices by considering solutions to 
$\sum_{i=1}^{k-1}x_i=x_k$ with ${x_i} \in [1,n]^d$ instead of $[1,n]$, where $[1,n] = \{1,2\dots,n\}$. Can this be extended to different
linear systems?  Moreover, could we possibly have different sets of coordinates satisfy different linear systems
and still guarantee monotonicity?

We do know that van der Waerden's Theorem \cite{vdw}, which states that any finite
coloring of $\Z$ admits arbitrarily long monochromatic arithmetic progressions, has
a multidimensional analog (in \cite{Rado2}, Rado gives credit to Gr\"unwald, the last name
of Gallai at the time; see also \cite{Witt}) along with a multidimensional polynomial analog
\cite{BL}.  Key to the proof of the multidimensional van der Waerden Theorem is the
translation-invariant property of arithmetic progressions.  In this sense, this multidimensional
analog is more geometric and less about the linear system.

Our goal here is about generalizing linear systems to  multidimensional space.
As a motivating question, consider the following.

\begin{question} Does there exist a minimum integer $n$ such that for every $2$-coloring of
$[1,n] \times [1,n]$ there exist $a,b,x,d \in \mathbb{Z}^+$ with $(a,x), (b,x+d),$ and $(a+b,x+2d)$  
all  the same color?
\end{question}

Notice that we are asking for the first coordinates to form a Schur triple and for the second
coordinates to form a 3-term arithmetic progression.  In one dimension,
by considering the system of equations $\{x_1+x_2-x_3=0, x_5-x_4-d=0, x_6-x_5-d=0\}$ (with
$d$ a variable), Rado's Theorem informs
us that we can guarantee a monochromatic Schur triple and monochromatic 3-term arithmetic progression of the same color
under any finite coloring of $\Z$.  However, there does not seem to be a bijection  
$\Z \times \Z \rightarrow \Z$ that preserves the linear system's solutions with 
well-defined colorings; such a bijection would guarantee an answer to our question
in $2$-dimensions via an appeal to Rado's Theorem.

In general, we investigate here which  
linear systems have the Rado property under partitioning, not of
$\Z$, but of $\Zd$.  
To discuss the multidimensional situation, we will use the following notation and language to distinguish
from the classical (1-dimensional) case.

\vskip 5pt
\noindent
{\bf Notation.} For $X \!=\! [x_{ij}]$ a $d \times k$ matrix,
we   refer to   $\mathbf{p}_j = (x_{1j}, x_{2j}, \dots, x_{dj})^\intercal\in\Zd$ as {\it points} and to 
$\xx{i} = (x_{i1}, x_{i2}, \dots, x_{ik}) \in \Zk$ as the {\it $i^{\mathit{th}}$ coordinates vector}.
To generalize a linear system  to address the motivating question above, we
seek to find, for given matrices $A_i$, a set of $k$ monochromatic points $\mathbf{p}_j$ such that
\begin{equation}\label{lvs}
A_i \xx{i}^\intercal={\mathbf{0}} \mbox{ for all $i \in \{1,2,\dots, d\}$}. 
\end{equation}
In other words, the $i^{\mathrm{th}}$ row of $X$ satisfies
the linear system $A_i{\mathbf{v}}=0$.  We will refer to
(\ref{lvs}) as a {\it $d$-dimensional linear vector system}
or simply a {\it linear vector system} if the dimensionality is clear.
In the situation where $A_1=A_2=\cdots=A_d=A$,
the equations in (\ref{lvs}) can be written as $AX^\intercal={\mathbf{0}}$, which we will
refer to as a {\it diagonal linear vector system}.

\vskip 5pt
\noindent
{\bf Example.}  For our motivating question, use
$d=2$ and $k=4$ along with the systems' matrices
$A_1 = 
\begin{bmatrix}
1&1&-1&0\\
\end{bmatrix}
$
and
$A_2 = 
\begin{bmatrix}
-1&1&0&-1\\
0&-1&1&-1\\
\end{bmatrix}
$
to see that we seek    monochromatic 2-dimensional points
$\mathbf{p}_j=[a_j\,b_j]^\intercal$, $1 \leq j \leq 4$, such that
$a_1+a_2=a_3$ (the first coordinates vector) and
$b_1, b_2, b_3$ (the second coordinates vector) are in arithmetic progression.
Note that $a_4$ is a dummy variable to make the lengths of the
coordinates vectors equal and that the
common difference ($b_4$) of the arithmetic progression is not needed to
answer our motivating question, but is necessary for the system.

\vskip 5pt

We start our investigation with   diagonal linear vector systems so that the same system is to be satisfied
by each $i^{\mathit{th}}$ coordinates vector.
Clearly, if we can guarantee a monochromatic solution in $\Z$, when considering points in $\Zd$
we need only consider the points with all coordinates equal since they are in bijection with
$\Z$ and we are (trivially) done.
But can we guarantee solutions if we disallow such solutions? In the case of the generalized Schur's Theorem,
as stated above the answer is yes \cite{BLR}.

Regarding monochromatic solutions to the generalized
Schur equation, we are moving from solutions in $[1,n]$ to solutions in $[1,n]^d$,   and we see that
if we restrict our $r$-coloring of $[1,n]^d$ to just the points on the diagonal,
$\{(i,i,\dots,i): 1 \leq i \leq n\}$, we trivially have monochromatic solutions to $\sum_{i=1}^{k-1}\mathbf{p}_i=\mathbf{p}_k$
 with the $\mathbf{p}_j\in [1,n]^d$ via (the obvious) bijection to $[1,n]$ coupled with Rado's Theorem..
Such a solution is an example of what we will call a degenerate solution (defined later; see Definition \ref{DegenDefn}).
  In \cite{BLR}, via use of the Vandermonde matrix properties and Ramsey's Theorem, it is proved that non-degenerate
solutions exist in the following sense.

\begin{theorem}{{\rm (\cite{BLR})}}\label{BLRThm} Let $d,k,r\in\mathbb{Z}^+$.
Every $r$-coloring of the points in $\Zd$ admits
a monochromatic solution to $\sum_{i=1}^{k-1}\mathbf{p}_i = \mathbf{p}_k$, $\mathbf{p}_j\in \Zd$
for $1 \leq j \leq k$, with
$\mathbf{p}_1,\mathbf{p}_2,\dots,\mathbf{p}_d$ linearly independent (over $\mathbb{Q}$) provided
$d \leq k-1$.
\end{theorem}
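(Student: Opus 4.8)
The plan is to combine Rado's Theorem (Theorem~\ref{RadoFullThm}) with Ramsey's Theorem to force not just monochromatic solutions, but monochromatic solutions whose first $d$ points are linearly independent. The key idea is to work not with single scalars but with carefully chosen geometric configurations of points, and to use a hypergraph coloring induced by the original $r$-coloring so that Ramsey's Theorem supplies a large ``monochromatic-in-the-hypergraph-sense'' structure, inside which the Vandermonde determinant guarantees the independence we want.

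First I would set up a family of candidate points parametrized so that linear independence becomes detectable via a Vandermonde determinant. Concretely, I would look at points of the form $\mathbf{p}_i = (t_i, t_i^2, \dots, t_i^d, \ast)$ or, more flexibly, points lying along a moment curve $t \mapsto (t, t^2, \dots, t^d)$ in $\Zd$, since any $d$ distinct points on such a curve are automatically linearly independent over $\mathbb{Q}$ (their coordinate matrix is a nonsingular Vandermonde matrix). The generalized Schur equation $\sum_{i=1}^{k-1}\mathbf{p}_i=\mathbf{p}_k$ is coordinatewise, so I would need the parameters $t_1,\dots,t_{k-1}$ to produce, in each coordinate-power, a valid Schur-type relation; the natural move is to hunt for a single monochromatic solution in a one-parameter subfamily and then boost it using the extra room provided by $d \le k-1$.

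Next I would build an auxiliary coloring to feed into Ramsey's Theorem. I would color $(d)$-element (or suitably sized) subsets $\{n_1 < n_2 < \cdots\}$ of a large interval by recording the color-pattern that the original $r$-coloring assigns to the finitely many points these indices generate under the moment-curve construction. Since there are only finitely many such patterns, Ramsey's Theorem yields an arbitrarily large subset that is homogeneous for this induced coloring. Inside a homogeneous set I can then select indices realizing the Schur relation in each coordinate simultaneously while keeping the generating points distinct on the moment curve; distinctness plus the Vandermonde structure then delivers the required linear independence of $\mathbf{p}_1,\dots,\mathbf{p}_d$, and homogeneity forces all $k$ points into a single color class. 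The hypothesis $d \le k-1$ is exactly what leaves enough of the $k-1$ summand-slots free to accommodate $d$ independent directions.

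The main obstacle I anticipate is reconciling the coordinatewise nature of the Schur equation with the rigidity of the moment curve: requiring $\sum t_i^j = t_k^j$ to hold simultaneously for every power $j \in \{1,\dots,d\}$ is far too restrictive, so the honest construction cannot literally use a single moment curve. The real work is to choose a more flexible parametrized family of points so that (i) the Schur relation holds in each of the $d$ coordinates (which can be arranged by making each coordinate an independent scalar Schur instance, guaranteed monochromatic by Rado's Theorem), while (ii) the resulting coordinate matrix of $\mathbf{p}_1,\dots,\mathbf{p}_d$ is still forced to be nonsingular. Threading this needle—picking the free parameters so that a Vandermonde-type determinant is nonzero while every coordinate independently satisfies the generalized Schur equation, all within a single color class extracted by Ramsey—is where the genuine difficulty lies, and I would expect the proof to spend most of its effort verifying that the determinant cannot vanish for the constructed family.
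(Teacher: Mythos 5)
Your proposal correctly identifies the two tools the argument actually uses (Ramsey's Theorem and the Vandermonde matrix, via the moment curve $t\mapsto(t,t^2,\dots,t^d)$), and you even correctly diagnose the obstruction: demanding $\sum_i t_i^j = t_k^j$ simultaneously for all powers $j$ is hopeless, so the solution points cannot literally lie on the moment curve. But you stop exactly at the point where the proof's one essential idea is needed, and the workaround you sketch does not work. Treating ``each coordinate as an independent scalar Schur instance, guaranteed monochromatic by Rado's Theorem'' fails because the coloring is of points of $\Zd$, not of $d$ separate copies of $\Z$: Rado applied coordinatewise gives no control over the color of the assembled $d$-dimensional points, and there is no mechanism to force the $k$ points into one color class. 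Likewise, coloring $d$-element subsets by ``color patterns'' leaves unclear which points are being generated and why a homogeneous set would contain a Schur configuration at all.

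The missing idea (visible in the paper's proof of Observation~\ref{Obs}, which extends the argument of \cite{BLR}) is to take the solution points to be \emph{differences} of moment-curve points rather than moment-curve points themselves. One colors the \emph{edges} of the complete graph on vertices $(i,i^2,\dots,i^d)$, $i\in[1,n]$, by assigning to the edge $\{i,j\}$ ($j>i$) the color $\chi((j-i,\,j^2-i^2,\dots,\,j^d-i^d))$, and invokes Ramsey to get a monochromatic $K_k$ on indices $i_1<\cdots<i_k$. Setting $\mathbf{p}_s = P(s,s+1) = (i_{s+1}-i_s,\dots,i_{s+1}^d-i_s^d)$ for $s<k$ and $\mathbf{p}_k=P(1,k)$, the relation $\sum_{s=1}^{k-1}\mathbf{p}_s=\mathbf{p}_k$ holds in every coordinate \emph{automatically by telescoping} --- this is what reconciles the coordinatewise equation with the rigid parametrization, and it is exactly the needle you said must be threaded. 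Monochromaticity is then immediate from the edge-coloring, and linear independence of $\mathbf{p}_1,\dots,\mathbf{p}_d$ (possible since $d\le k-1$ supplies enough summand slots) follows from nonsingularity of the associated difference-of-Vandermonde matrix. Without the telescoping construction your outline is a plan rather than a proof; with it, the rest of your write-up (Ramsey extraction, Vandermonde nonvanishing) falls into place.
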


As motivation for the main result in this paper, in the next section we extend Balaji, Lott, and Rice's \cite{BLR} proof method, which relies on the
Vandermonde matrix, to show similar results for other equations.
In the third section, we give conditions for the existence of
monochromatic solutions not directly implied by Theorem \ref{RadoFullThm} to systems of
linear vector equations under $r$-colorings of $\Zd$.
We end by answering our motivating question.

 \section{More Motivation}

In this section we extend Theorem \ref{BLRThm} to a larger family of equations via the following
observation, which follows easily from the proof of Theorem
\ref{BLRThm} as found in \cite{BLR}

\begin{observation}\label{Obs} Let $d,k,\ell,r\in\mathbb{Z}^+$ with $d \geq 2$.
Every $r$-coloring of the points in $\Zd$ admits
a monochromatic solution to the linear vector equation 
$$\sum_{i=1}^{k}\mathbf{p}_i = \sum_{i=1}^{\ell}\mathbf{q}_i$$ with  
$\mathbf{p}_i, \mathbf{q}_i\in\Zd$ and
$\mathbf{p}_1, \mathbf{p}_2,\dots, \mathbf{p}_d$ linearly independent (over $\mathbb{Q}$) provided
$d \leq k-1$ and $\mathbf{q}_1, \mathbf{q}_2,\dots, \mathbf{q}_d$ linearly independent provided $d \leq \ell-1$.
Moreover, for such a solution we can have $\{\mathbf{p}_1, \mathbf{p}_2,\dots, \mathbf{p}_{k-1}\} \cap \{\mathbf{q}_1, \mathbf{q}_2 ,\dots, \mathbf{q}_{\ell-1}\} = \emptyset$.
\end{observation}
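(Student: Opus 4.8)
The plan is to reuse, essentially verbatim, the monochromatic configuration produced inside the proof of Theorem \ref{BLRThm}. Recall that the mechanism there is to fix the coloring $\chi$ and the moment-curve embedding $\phi(a) = (a, a^2, \ldots, a^d)^\intercal \in \Zd$ for $a \in \mathbb{Z}^+$, and then to color the edge $\{a,b\}$ (with $a<b$) of the complete graph on $[1,N]$ by $\chi\!\left(\phi(b)-\phi(a)\right)$; here $\phi(b)-\phi(a)\in\Zd$ since each coordinate $b^t-a^t$ is positive. For $N \ge R_r(k+\ell)$, where $R_r(\cdot)$ is the $r$-color Ramsey number, Ramsey's Theorem yields parameters $a_0 < a_1 < \cdots < a_{k+\ell-1}$ spanning a monochromatic clique: every difference $\phi(a_s)-\phi(a_t)$ with $t<s$ receives the same color $c$ under $\chi$.

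The one new idea is to read off the two-sided equation from two increasing walks up this clique that share their endpoints. Partition $\{a_1, \ldots, a_{k+\ell-2}\}$ into two sets, using $k-1$ of them as the interior nodes $a_0 < u_1 < \cdots < u_{k-1} < a_{k+\ell-1}$ of one path and the remaining $\ell-1$ as the interior nodes $a_0 < w_1 < \cdots < w_{\ell-1} < a_{k+\ell-1}$ of a second. Setting $\mathbf p_i = \phi(u_i)-\phi(u_{i-1})$ (with $u_0 = a_0$, $u_k = a_{k+\ell-1}$) for $1 \le i \le k$ and $\mathbf q_j = \phi(w_j)-\phi(w_{j-1})$ (with $w_0 = a_0$, $w_\ell = a_{k+\ell-1}$) for $1 \le j \le \ell$, both telescoping sums collapse to the same vector, so $\sum_{i=1}^k \mathbf p_i = \phi(a_{k+\ell-1}) - \phi(a_0) = \sum_{j=1}^\ell \mathbf q_j$. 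Every $\mathbf p_i$ and $\mathbf q_j$ is a clique difference, hence lies in $\Zd$ and has color $c$, giving the desired monochromatic solution.

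It then remains to verify the independence and disjointness clauses, both immediate from the structure. For independence, $\mathbf p_1, \ldots, \mathbf p_d$ are the consecutive differences of the vectors $\phi(u_0), \ldots, \phi(u_d)$ at distinct parameters (available since $d \le k-1$); lifting to $\psi(a) = (1, a, \ldots, a^d)^\intercal \in \mathbb{R}^{d+1}$, the vectors $\psi(u_0), \ldots, \psi(u_d)$ are linearly independent by the Vandermonde determinant, so their consecutive differences are independent as well, and since each such difference has vanishing first coordinate, deleting that coordinate (which recovers $\mathbf p_i$) preserves independence. Symmetrically $\mathbf q_1, \ldots, \mathbf q_d$ are independent when $d \le \ell-1$. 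For disjointness, distinct ordered pairs $a<b$ produce distinct vectors $\phi(b)-\phi(a)$: equality of the first two coordinates forces $b-a=b'-a'$ and $b^2-a^2=b'^2-a'^2$, hence $b+a=b'+a'$ and so $(a,b)=(a',b')$ (this uses the second coordinate, available because $d \ge 2$). Since the edges underlying $\mathbf p_1, \ldots, \mathbf p_{k-1}$ and $\mathbf q_1, \ldots, \mathbf q_{\ell-1}$ use interior nodes from the two disjoint index sets, meeting only at the common base $a_0$ along distinct edges, all these pairs are distinct and $\{\mathbf p_1, \ldots, \mathbf p_{k-1}\} \cap \{\mathbf q_1, \ldots, \mathbf q_{\ell-1}\} = \emptyset$.

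The substantive step is the observation of the second paragraph, that a balanced two-sided equation is exactly a pair of increasing paths between a common minimum and maximum vertex of a monochromatic clique, after which monochromaticity is free. I expect the only point requiring care is the bookkeeping: ensuring the clique is large enough ($k+\ell$ vertices) to host both paths on disjoint interiors while still leaving the first $d$ steps of each path available for the Vandermonde independence argument. This is precisely why one invokes $R_r(k+\ell)$ rather than a smaller Ramsey number.
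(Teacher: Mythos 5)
Your proposal is correct and follows essentially the same route as the paper: Ramsey's Theorem applied to the difference coloring on the moment curve, two telescoping increasing paths sharing endpoints in the monochromatic clique (the paper takes the specific paths $1\to 2\to\cdots\to k\to k+\ell$ and $1\to k+1\to\cdots\to k+\ell$), Vandermonde for independence, and a first-two-coordinates argument for disjointness of the $\mathbf{p}$'s and $\mathbf{q}$'s. Your injectivity argument for edge differences (forcing $(a,b)=(a',b')$ from the first two coordinates) is a slightly cleaner packaging of the paper's inequality-based disjointness check, but the substance is identical.
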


\begin{proof} Let $n=R(k+\ell;r)$ be the Ramsey number so that every $r$-coloring
of the edges of $K_n$ admits a $K_{k+\ell}$ subgraph with all edges the same color.  Let $\chi$ be
an arbitrary $r$-coloring $[1,n]^d$ .

Consider the complete graph on vertex set
$\{(i,i^2,\dots,i^d): i \in [1,n]\}$.  Color the edge connecting vertex $(i,i^2,\dots,i^d)$ and
$(j,j^2,\dots,j^d)$, with $j>i$, by $\chi((j-i,j^2-i^2,\dots,j^d-i^d))$.  By Ramsey's Theorem
we have a monochromatic subgraph on vertex set
$\{(i_j, i_j^2,\dots,i_j^d): 1 \leq j \leq k+\ell\}$.  Assume that $i_1<i_2<\dots<i_{k+\ell}$ so that, under $\chi$, 
for every pair $s < t$, the points
$(i_t-i_s, i_t^2-i_s^2, \dots,i_t^d-i_s^d)$   are all identically colored. Label these points $P(s,t)$.

Let $\mathbf{p}_1=P(1,2), \mathbf{p}_2=P(2,3),\dots,\mathbf{p}_{k-1}=P(k-1,k), \mathbf{p}_k=P(k,k+\ell)$ and
$\mathbf{q}_1=P(k+1,k+2), \mathbf{q}_2=P(k+2,k+3), \dots, \mathbf{q}_{\ell-1}=P(k+\ell-1,k+\ell), \mathbf{q}_\ell=P(1,k+1)$.  It is easy to check that
$\sum_{i=1}^{k}\mathbf{p}_i = \sum_{i=1}^{\ell}\mathbf{q}_i$ and the argument using the Vandermonde matrix
as found in \cite{BLR} proves the linear independence claim.

To see that $\{\mathbf{p}_1, \mathbf{p}_2,\dots, \mathbf{p}_{k-1}\} \cap \{\mathbf{q}_1, \mathbf{q}_2 ,\dots, \mathbf{q}_{\ell-1}\} = \emptyset$, assume otherwise
and consider $P(s,s+1) = P(k+t,k+t+1)$ for some $s \in [1,k-1]$ and $t \in [1,\ell-1]$. Note that $s+1<k+t$.  Since the first
coordinates are equal (by assumption), we have $i_{s+1}-i_s = i_{k+t+1}-i_{k+t}$.
We now look at the second coordinates.  First, we have
$i^2_{k+t+1} - i^2_{k+t} = (i_{k+t+1}-i_{k+t})(i_{k+t+1}+i_{k+t})$.  Using the equality of
the first coordinates, this means that 
$i^2_{k+t+1} - i^2_{k+t} = (i_{s+1}-i_s)(i_{k+t+1}+i_{k+t}) >(i_{s+1}-i_s)(i_{s+1}+i_s)=i^2_{s+1}-i^2_s$,
so that the second coordinates do not agree.  Hence,  $P(s,s+1) \neq P(k+t,k+t+1)$.
\end{proof}

We now see that there are linear equations other than the generalized Schur equation
for which we can guarantee monochromatic solutions in multidimensional spaces
 not implied by the classical case.
This suggests the question: Can we get a similar result for any Rado system?

\section{Generic Linear Homogeneous Vector Systems}

When considering solutions
to a generic linear homogeneous vector equation $\sum_{i=1}^k a_i \mathbf{v}_i = \mathbf{0}$
we lose Ramsey's Theorem and the Vandermonde matrix as a tool to prove linear independence.
However,  it is not clear that linear independence is the correct property to use in
general.  What we do seek is to show that there are monochromatic solutions in $\Zd$
that cannot be obtained by direct application of Rado's Theorem (and a bijection
with the diagonal of $\Zd$).

\vskip 5pt
\begin{definition}[Degenerate]\label{DegenDefn} For $\mathbf{v}, \mathbf{w} \in \Zd$, let
 $\mathbf{v} \circ \mathbf{w}=(v_1w_1,v_2w_2,\dots,v_dw_d)$ denote the Hadamard product
 (also known as, coincidentally, the Schur product). 
We say a finite set of points $P\subseteq \Zd$ is {\it degenerate} if there exists $\mathbf{v} \in \Zd$ such
that $P \subseteq \{\mathbf{v} \circ (i,i,\dots,i): 1 \leq i \leq n\} $ for some $n \in \Z$.  Otherwise, we say that $P$ is
{\it non-degenerate}.
\end{definition}

Note that degenerate solutions are solutions that can be obtained by an obvious bijection to $[1,n]$ and direct application of Rado's Theorem, while non-degenerate solutions 
do not appear to be.  Hence, we strive to prove that monochromatic non-degenerate solutions exist
in colorings of multidimensional spaces.

We start by proving a multidimensional Deuber theorem.  Toward this end, we will use
the following definition in order to state Deuber's Theorem \cite{Deu}.

\begin{definition}[$(m,p,c)$-set] Let $m,p,c \in \Z$. A set $M \subseteq \Z$ is called an {\it $(m,p,c)$-set} if
there exist {\it generators} $g_1,g_2,\dots,g_m \in \Z$ such that
$$
M = \bigcup_{i=1}^m \left\{cg_i + \sum_{j=i+1}^m \lambda_j g_j: 
\lambda_j \in \mathbb{Z} \cap [-p,p] \mbox{ for } 1 \leq j \leq m\right\},
$$
where we take the empty sum to equal 0.
\end{definition}

Note that we have the integers $\lambda_j$ taking on negative values but that
the set $M$ must be a set of positive integers.

\begin{theorem}[Deuber's Theorem]\label{DeuberThm}
Let $m,p,c,r \in \Z$ be fixed.  Then there exist $M,P,\mu \in \Z$ so
that every $r$-coloring of any   $(M,P,c^\mu)$-set admits
a monochromatic $(m,p,c)$-set.
\end{theorem}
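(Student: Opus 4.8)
The plan is to induct on the number of generators $m$, treating an $(m,p,c)$-set as the $m$-generator generalization of a Brauer configuration (an arithmetic progression together with its common difference) and adding one generator at a time. For the base case $m=1$, a $(1,p,c)$-set is the single element $\{cg_1\}$, and any $(M,P,c^\mu)$-set with $\mu\ge 1$ contains $c^\mu h = c\,(c^{\mu-1}h)$ for a generator $h$ (take all trailing coefficients equal to $0$); since a single point is trivially monochromatic, this already produces a monochromatic $(1,p,c)$-set, so the base case fixes $M,P,\mu$ to harmless values and imposes no real constraint.

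For the inductive step I would exploit the recursive structure of $(m,p,c)$-sets. With generators $g_1,\dots,g_m$, the elements indexed by $i\ge 2$ form an $(m-1,p,c)$-set $T$ on $g_2,\dots,g_m$, while the elements indexed by $i=1$ form a single ``first layer'' $cg_1+F$, where $F=\bigl\{\sum_{j=2}^m\lambda_j g_j:\lambda_j\in[-p,p]\bigr\}$ is a fixed finite set containing $\mathbf{0}$. Building a monochromatic $(m,p,c)$-set therefore splits into two tasks: first produce a monochromatic $(m-1,p,c)$-set $T$, and then prepend a generator $g_1$ so that the translate $cg_1+F$ is monochromatic in the \emph{same} color as $T$.

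The first task is immediate from the induction hypothesis: an $(M,P,c^\mu)$-set is self-similar, in that the elements built from a suffix of its generators again form an $(M',P,c^\mu)$-set, so choosing $M,P,\mu$ large enough that this sub-set meets the $(m-1)$-parameters forces a monochromatic $(m-1,p,c)$-set inside it. The delicate task is the color matching. Here I would run a van der Waerden--style color-focusing argument---the same double induction on the number of colors that underlies Brauer's theorem---arranging a family of shifted copies of candidate layers together with shifted monochromatic $(m-1,p,c)$-sets sharing a common focus, so that a pigeonhole over the $r$ colors forces a layer and a tail of one common color that assemble into a single $(m,p,c)$-set. The exponent $\mu$ in the leading coefficient $c^\mu$ is exactly what lets a factor of $c$ be peeled off for each prepended generator, so that the small set's leading coefficient $c$ is realized inside a set whose leading coefficient is $c^\mu$, while taking $P$ sufficiently large keeps the embedded trailing coefficients within the window $[-P,P]$.

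The hard part will be the simultaneous bookkeeping of two intertwined demands: the algebraic embedding, guaranteeing that the constructed $(m,p,c)$-set literally lies inside the prescribed $(M,P,c^\mu)$-set (which drives the divisibility choice $c^\mu$ and the size of $P$), and the color focusing, the secondary induction on $r$ that synchronizes the color of the new first layer with the color of the inductively obtained $(m-1,p,c)$-set. Propagating the parameters $M,P,\mu$ correctly through both the outer induction on $m$ and the inner induction on the number of colors, without the two interfering, is where the care lies; the remaining verifications (that the listed forms lie in the set and satisfy the defining equations) are routine. One might hope to shortcut via Theorem \ref{RadoFullThm}, since the forms defining an $(m,p,c)$-set constitute a system satisfying the columns condition and hence admit a monochromatic solution in $\mathbb{Z}^+$; but Rado's theorem alone does not place that solution inside a prescribed $(M,P,c^\mu)$-set, so the embedding-and-focusing argument above still appears necessary.
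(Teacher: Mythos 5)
The paper does not prove this statement: Deuber's Theorem is stated as a known result and attributed to \cite{Deu}, so there is no internal proof to compare yours against. Judged on its own terms, your outline follows the classical route --- induction on the number of generators $m$, exploiting the self-similar structure of $(m,p,c)$-sets (the rows indexed by $i\geq 2$ form an $(m-1,p,c)$-set on $g_2,\dots,g_m$) and a van der Waerden/Brauer-style color-focusing argument to prepend a first generator $g_1$ whose row $cg_1+F$ matches the color of the inductively obtained tail. That is indeed the shape of Deuber's original argument, and your remarks about the roles of the parameters are the right ones: the exponent $\mu$ in $c^\mu$ is what allows a factor of $c$ to be peeled off for each prepended generator, and enlarging $P$ absorbs the trailing coefficients of the embedded copy.

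The gap is that the proposal stops exactly where the work begins. The focusing step --- producing, inside a \emph{prescribed} $(M,P,c^\mu)$-set, a family of candidate first rows together with compatible monochromatic $(m-1,p,c)$-sets arranged so that a pigeonhole over the $r$ colors closes the induction --- is only named, not carried out, and that lemma is essentially the entire content of the theorem. In particular, your two-stage description (``first produce a monochromatic $(m-1,p,c)$-set $T$, then prepend a generator so that $cg_1+F$ is monochromatic in the same color as $T$'') cannot be executed in that order: the color of $T$ is not known in advance, so one must construct candidates for all $r$ colors simultaneously and interleave the inner induction on the number of colors with the outer induction on $m$. You acknowledge this, but a complete proof must make the simultaneity explicit in the choice of $M$, $P$, and $\mu$, and must also verify positivity (that $cg_1+\sum_j\lambda_jg_j$ remains in $\mathbb{Z}^+$ even for negative $\lambda_j$, which forces an ordering condition on the generators). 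As it stands the proposal is a correct plan, not a proof.
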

 
 The applicability of Deuber's Theorem comes from the fact that for any $\ell \times m$ matrix $A$
that satisfies the columns conditions, there exist $p,c \in \Z$ such that {\it every}
$(m,p,c)$-set contains a solution to $A\mathbf{v} = \mathbf{0}$.  Hence, Deuber's Theorem
implies one direction of Rado's Theorem since $\Z$ contains every
$(M,P,c^\mu)$-set.

\vskip 5pt
\noindent
{\bf Notation.} When referring to the quantities in Theorem \ref{DeuberThm}
we will say that $(m,p,c)$ is {\it realized} by $M,P,c^\mu$ and write
$(M,P,c^\mu) \vDash_r (m,p,c)$, where we may omit $r$ if the number
of colors in context is clear.
\vskip 5pt
The following easy lemma will prove useful.

\begin{lemma}\label{MPCLemma} The following hold:
\begin{enumerate}
\item Let $r<s$ be positive integers. If $(M,P,c^\mu) \vDash_s (m,p,c)$, then
$(M,P,c^\mu) \vDash_r (m,p,c)$.
\item Let $M'\geq M$ and $P'\geq P$.  If $(M,P,c^\mu) \vDash_r (m,p,c)$, then
$(M',P',c^\mu) \vDash_r (m,p,c)$.
\item Let $t>\mu$.  If $(M,P,c^\mu) \vDash_r (m,p,c)$, then
$(M,c^{t-\mu}P,c^t) \vDash_r (m,p,c)$.
\end{enumerate}
\end{lemma}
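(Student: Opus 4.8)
The plan is to prove each of the three claims directly by manipulating the definitions, since each is essentially a monotonicity or substitution statement about the relation $\vDash_r$. Recall that $(M,P,c^\mu) \vDash_r (m,p,c)$ means every $r$-coloring of every $(M,P,c^\mu)$-set contains a monochromatic $(m,p,c)$-set. The common strategy for all three parts is: take an arbitrary coloring witnessing the hypothesis of the desired conclusion, restrict or reinterpret it so that the assumed realization applies, and then check that the monochromatic $(m,p,c)$-set it produces still lives inside the relevant set.

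\medskip

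For part (1), I would argue that weakening the number of colors only makes the coloring task easier. Given $r < s$ and an arbitrary $r$-coloring $\chi$ of an $(M,P,c^\mu)$-set $N$, I regard $\chi$ as an $s$-coloring of $N$ that simply never uses colors $r+1,\dots,s$. Since $(M,P,c^\mu) \vDash_s (m,p,c)$, this $s$-coloring admits a monochromatic $(m,p,c)$-set, and that set is monochromatic under $\chi$ as well. Hence $(M,P,c^\mu) \vDash_r (m,p,c)$.

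\medskip

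For part (2), the key observation is that every $(m,p,c)$-set is itself an $(M',P',c^\mu)$-set-like object sitting inside larger parameters, but more to the point, I would show that an $(M,P,c^\mu)$-set sits inside an $(M',P',c^\mu)$-set whenever $M' \geq M$ and $P' \geq P$. Given generators $g_1,\dots,g_{M'}$ realizing an $(M',P',c^\mu)$-set, restricting to the first $M$ generators and to the smaller range $[-P,P] \subseteq [-P',P']$ for the coefficients $\lambda_j$ exhibits an $(M,P,c^\mu)$-subset. Therefore, any $r$-coloring of the ambient $(M',P',c^\mu)$-set restricts to an $r$-coloring of this embedded $(M,P,c^\mu)$-set, which by hypothesis contains a monochromatic $(m,p,c)$-set; that set is monochromatic in the original coloring too.

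\medskip

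Part (3) is the substantive one and where I expect the main obstacle. Here the generator exponent changes from $c^\mu$ to $c^t$ and the middle parameter rescales from $P$ to $c^{t-\mu}P$, so I cannot simply restrict a coloring. Instead, given an $(M, c^{t-\mu}P, c^t)$-set $N$ with generators $h_1,\dots,h_M$, I would produce an $(M,P,c^\mu)$-set from the \emph{same} generators after absorbing the extra factor of $c^{t-\mu}$. The idea is that a typical element $c^t h_i + \sum_{j>i}\lambda_j h_j$ of $N$ with $\lambda_j \in [-c^{t-\mu}P, c^{t-\mu}P]$ can be rewritten by factoring $c^{t-\mu}$ out of the coefficients, matching it against the element $c^\mu (c^{t-\mu}h_i) + \sum_{j>i}(\lambda_j/c^{t-\mu})(c^{t-\mu}h_j)$ of the $(M,P,c^\mu)$-set generated by $c^{t-\mu}h_1,\dots,c^{t-\mu}h_M$; the hard part is verifying that this rescaling lands the coefficients $\lambda_j$ in the correct range $[-P,P]$ and respects integrality, i.e. that the natural map embeds an $(M,P,c^\mu)$-set inside $N$ (or at least inside a monochromatic-preserving copy). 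Once such an embedding is established, the argument concludes as before: any $r$-coloring of $N$ restricts to one on the embedded $(M,P,c^\mu)$-set, the hypothesis yields a monochromatic $(m,p,c)$-set, and it remains monochromatic in $N$. The delicate bookkeeping of which elements carry the factor $c^{t-\mu}$ and confirming that the resulting coefficient set is exactly $[-P,P]$ is the step requiring care.
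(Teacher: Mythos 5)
Your proposal matches the paper's proof essentially step for step: (1) an $r$-coloring is an $s$-coloring with unused colors, (2) restrict generators and coefficient ranges, and (3) embed an $(M,P,c^\mu)$-set inside the given $(M,c^{t-\mu}P,c^t)$-set via the rescaled generators $c^{t-\mu}h_i$. The only remark worth making is that the verification you flag as delicate in (3) is easier than you fear: you only need the containment of the rescaled set in $N$, which amounts to multiplying integer coefficients $\mu_j\in[-P,P]$ by $c^{t-\mu}$ to land in $[-c^{t-\mu}P,c^{t-\mu}P]$, so no division or integrality issue ever arises.
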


 \begin{proof} For (1), note that an $r$-coloring is an $s$-coloring where some colors are not used.
 For (2),  just take a subset of the generators and a subsets of the $\lambda$ values.
 We prove (3) by showing that we always have an $(M,P,c^\mu)$-set contained in any
 $(M,c^{t-\mu}P, c^t)$-set.  Let $g_1,g_2,\dots,g_M$ be generators of an $(M,c^{t-\mu}P, c^t)$-set.
 Set $h_i = c^{t-\mu}g_i$ for $1 \leq i \leq M$ and consider them as generators of an
 $(M,P,c^{\mu})$-set.  It is easy to see that this  $(M,P,c^{\mu})$-set is a subset of the 
  $(M,c^{t-\mu}P, c^t)$-set.  Since every $r$-coloring of any $(M,P,c^{\mu})$-set admits
  a monochromatic $(m,p,c)$-set, we have a monochromatic $(m,p,c)$-set inside any $r$-coloring of an
  $(M,c^{t-\mu}P, c^t)$-set.
 \end{proof}

\subsection{A Multidimensional Deuber Theorem}

There is already a multidimensional analog of Deuber's Theorem due to Bergelson, Johnson, and
Moreira \cite{Berg}.  In their result, one of their goals is to replace the ``$p$" in $(m,p,c)$-sets with
a family of multidimensional polynomial functions and ``$c$" by an additive homomorphism and
letting the generators be vectors. While being  a strong 
multidimensional polynomial generalization of
Deuber's Theorem, it does not   correlate with non-degenerate solutions of a linear vector system in any way that
they present in \cite{Berg} or that
this author could determine.  They do present a generalization applying to systems of equations, which
generalizes coloring $\Z$ to coloring any countable commutative
semigroup with identity 0 (and also with a restriction on how the columns condition is satisfied).  Hence,  their generalization holds over $\left(\mathbb{Z}^{\geq 0}\right)^d$, but not
$\left(\mathbb{Z}^+\right)^d$, which is a small, but quite significant difference.  Another   difference is with their use of idempotent ultrafilters in the Stone-\v{C}ech
compactification of the positive integers, as the existence of such idempotents 
assumes that  every filter on $\mathbb{R}$ can be extended to
an ultrafilter \cite{DT}, a recent result of Di Nasso and Tachtsis   that
is a strictly weaker assumption than the Axiom of Choice (which until their result was needed to prove the existence of
idempotent ultrafilters).

We start by defining multidimensional $(m,p,c)$-sets.

\begin{definition} Let $d,m,p,c\in\Z$.  Let $M_1,M_2,\dots,M_d$ each be an $(m,p,c)$-set.
We call $M_1 \times M_2 \times \cdots \times M_d$ a {\it $d$-dimensional $(m,p,c)$-set}.
\end{definition}

\begin{remark}\label{Rem12} There are less than $(2p+1)^{md}$ elements in a $d$-dimensional
$(m,p,c)$-set.
\end{remark}

\begin{theorem}[A Multidimensional Deuber Theorem]\label{MDeuberThm} Let $d,m,p,c,r \in \Z$.  There exist $M,P,\mu \in \Z$ such
that every $r$-coloring  of any $d$-dimensional $(M,P,c^\mu)$-set admits
a monochromatic $d$-dimensional $(m,p,c)$-set.

\end{theorem}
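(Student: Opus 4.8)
The plan is to prove the statement by induction on the dimension $d$, using Deuber's Theorem (Theorem~\ref{DeuberThm}) as the base case $d=1$ and a product-coloring amalgamation for the inductive step. The heart of the argument is the standard product-Ramsey trick: to locate a monochromatic grid $M_1\times\cdots\times M_d$ one peels off the last coordinate by recoloring the first $d-1$ coordinates according to the \emph{entire} color pattern they induce on the last coordinate. A monochromatic $(d-1)$-dimensional cell under this induced coloring then forces the original coloring $\chi$ to depend only on the last coordinate throughout that cell, and one applies Deuber's Theorem once more to that single remaining coordinate.

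In detail, for the step from $d-1$ to $d$ (carried out for every number of colors $r$), I would first apply Deuber's Theorem with $r$ colors to obtain parameters with $(\mathcal M_d,\mathcal P_d,c^{\nu_d})\vDash_r (m,p,c)$, and record that any $(\mathcal M_d,\mathcal P_d,c^{\nu_d})$-set has fewer than $s:=(2\mathcal P_d+1)^{\mathcal M_d}$ elements (the one-dimensional instance of the count in Remark~\ref{Rem12}). Since $s$ is now fixed, I would invoke the induction hypothesis with $r^{s}$ colors to obtain parameters $(M',P',c^{\mu'})$ for which every $r^{s}$-coloring of a $(d-1)$-dimensional $(M',P',c^{\mu'})$-set contains a monochromatic $(d-1)$-dimensional $(m,p,c)$-set. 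Given an $r$-coloring $\chi$ of $N_1\times\cdots\times N_d$ with $N_1,\dots,N_{d-1}$ being $(M',P',c^{\mu'})$-sets and $N_d$ an $(\mathcal M_d,\mathcal P_d,c^{\nu_d})$-set, I define the induced coloring $\Phi(\mathbf y)=\bigl(z\mapsto\chi(\mathbf y,z)\bigr)$ of $N_1\times\cdots\times N_{d-1}$, which uses at most $r^{|N_d|}\le r^{s}$ colors. The induction hypothesis (after reducing colors via Lemma~\ref{MPCLemma}(1) if fewer are used) yields a $\Phi$-monochromatic cell $M_1\times\cdots\times M_{d-1}$; on this cell $\chi(\mathbf y,z)$ is independent of $\mathbf y$, so it defines a genuine $r$-coloring of $N_d$, to which Deuber's Theorem applies to produce a monochromatic $(m,p,c)$-set $M_d\subseteq N_d$. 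Then $M_1\times\cdots\times M_d$ is the desired monochromatic $d$-dimensional $(m,p,c)$-set.

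The one genuinely delicate point is reconciling the per-coordinate parameters $(\mathcal M_d,\mathcal P_d,c^{\nu_d})$ and $(M',P',c^{\mu'})$ into the single triple $(M,P,c^\mu)$ demanded by the theorem, and this is where I expect the main obstacle. One cannot simply inflate every coordinate to shared parameters via Lemma~\ref{MPCLemma}, because enlarging a coordinate set increases its cardinality, which increases the number $r^{|N_d|}$ of induced colors and thereby destroys the color bound the induction hypothesis was chosen to meet---a circularity. I would resolve this by unifying through \emph{containment} rather than equality: using parts (2) and (3) of Lemma~\ref{MPCLemma}, choose $\mu=\max(\nu_d,\mu')$, $M=\max(\mathcal M_d,M')$, and $P\ge\max\bigl(c^{\mu-\nu_d}\mathcal P_d,\ c^{\mu-\mu'}P'\bigr)$, so that every $(M,P,c^\mu)$-set already contains both an $(\mathcal M_d,\mathcal P_d,c^{\nu_d})$-set and an $(M',P',c^{\mu'})$-set (the proofs of (2) and (3) furnish exactly such containments). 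Then, given any $d$-dimensional $(M,P,c^\mu)$-set, I pass to contained sub-sets carrying the original, smaller per-coordinate parameters and run the amalgamation inside them; because those sub-sets obey the original cardinality bound $s$, the number of induced colors stays at most $r^{s}$ and the induction goes through. With the base case $d=1$ being Theorem~\ref{DeuberThm} verbatim, the induction then closes.
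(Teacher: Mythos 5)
Your proposal is, in substance, the paper's own proof: induction on the dimension with Deuber's Theorem (Theorem~\ref{DeuberThm}) as the base case and the product-coloring amalgamation for the inductive step, using Lemma~\ref{MPCLemma} to reconcile parameters. The differences are worth recording, though. First, a cosmetic one: the paper peels off the \emph{first} coordinate rather than the last --- it colors $S_1$ by the pattern $\chi$ induces on the slice $S_2\times\cdots\times S_{d+1}$, applies one-dimensional Deuber with $r'=r^{(2P_d(r)+1)^{dM_d(r)}}$ colors to $S_1$, and then applies the $d$-dimensional induction hypothesis to the remaining coordinates under the resulting common $r$-coloring; your version is the mirror image and works equally well. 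Second, and more substantively, the point you single out as delicate is precisely where the paper is vulnerable. The paper inflates all coordinates to a single common triple $(M',P'',c^t)$ by taking maxima via Lemma~\ref{MPCLemma} and then asserts, citing Remark~\ref{Rem12}, that the induced coloring $\gamma$ uses at most $r'$ colors; but $r'$ was computed from the \emph{pre-inflation} parameters $(M_d(r),P_d(r))$, while the slice being recorded is an $(M',P'',c^t)$-set with $M'\ge M_d(r)$ and $P''\ge P_d(r)$, so it can have more than $(2P_d(r)+1)^{dM_d(r)}$ elements and $\gamma$ can exceed $r'$ colors --- exactly the circularity you describe. Your fix --- unify the triple by \emph{containment}, then run the amalgamation inside contained sub-$(m,p,c)$-sets carrying the original, smaller per-coordinate parameters so that the induced color count stays bounded by $r^{s}$ --- repairs this cleanly (the paper's argument is also repairable by having $\gamma$ record colors only on a sub-slice that is an $(M_d(r),P_d(r),c^{\mu_d(r)})$-set, which is all the subsequent argument needs). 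So: same approach, but your write-up closes a gap the paper's leaves open.
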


\begin{proof} Using the result and notation of Theorem \ref{DeuberThm}, we start by giving
notation for the existence of $M,P,$ and $\mu$.  We let
${\mathcal D}_d(m,p,c;r)$ represent the statement
$$
\begin{array}{rl}
{\mathcal D}_d(m,p,c;r): &\mbox{ there exists $M, P, \mu \in\Z$ so that every $r$-coloring of a\, }\\
&\mbox{ $d$-dimensional $(M,P,c^\mu)$-set admits a monochromatic}\\
& \mbox{ $d$-dimensional $(m,p,c)$-set}.
\end{array}
$$

We will prove that ${\mathcal D}_d(m,p,c;r)$ is true for all $d,m,p,c,r\in\Z$ by induction on $d$.
With a slight abuse of notation, we will use 
$$
(M_d(r),P_d(r),c^{\mu_d(r)}) \vDash {\mathcal D}_d(m,p,c;r)
$$
for the quantities that realize ${\mathcal D}_d(m,p,c;r)$.

The base case for our induction, $d=1$, is Theorem \ref{DeuberThm} so
we assume that ${\mathcal D}_d(m,p,c;r)$ is true for all $m,p,c,r$ and will show that ${\mathcal D}_{d+1}(m,p,c;r)$
is true for all $m,p,c,r$.

Let $\chi$ be an $r$-coloring of an arbitrary $(d+1)$-dimensional $(M',P'',c^t;r)$-set,
say $S_1 \times S_2 \times \cdots \times S_{d+1}$, where $M', P'',$ and $t$ are
given below.

Let
$$r' = r^{ (2P_d(r)+1)^{dM_d(r)}} $$
so that
$$
\left(M_1(r'), P_1(r'), c^{\mu_1(r')}\right) \vDash {\mathcal D}_1  (m,p,c; r').
$$

From Lemma \ref{MPCLemma}, with
$M' = \max(M_1(r'),M_d(r))$ and $P'=\max(P_1(r'),P_d(r))$,  
we see that\break $ \left(M', P', c^{\mu_1(r')}\right) \vDash {\mathcal D}_1  (m,p,c; r').$

Let $t = \max(\mu_1(r'), \mu_d(r))$, $\mu = \min(\mu_1(r'), \mu_d(r))$,
and   $P''=c^{t-\mu}P'$.
By  Lemma \ref{MPCLemma} again, we see that
$$
(M',P'',c^t) \vDash {\mathcal D}_1(m,p,c;r'),  
$$
and by applying Lemma \ref{MPCLemma} to each dimension of a $d$-dimensional $(M,P,c^\mu)$-set,
we obtain
\begin{equation}\label{eqn1}
(M',P'',c^t) \vDash {\mathcal D}_d(m,p,c;r).
\end{equation}

We claim that
$$
(M',P'',c^t) \vDash {\mathcal D}_{d+1}  (m,p,c; r).
$$

Define an $r'$-coloring $\gamma$ of $S_1$ as follows.
 For $s \in S_1$, let $\gamma(s)$ be the $r$-coloring of the set
 $$\{\chi(s,x_2,x_3,\dots, x_{d+1}): x_i \in S_i \mbox{ for } 2 \leq i \leq d+1\}$$ (with any
fixed ordering of elements).  From Remark \ref{Rem12}, we see that
this is well-defined (where we may not need all $r'$ colors).

 Since ${\mathcal D}_1(m,p,c;r')$ holds, there exists a $\gamma$-monochromatic $(m,p,c)$-set $T_1$
 so that the sets $$\{\chi((t,x_2,x_3,\dots,x_{d+1}): x_i \in S_i \mbox{ for }2 \leq i \leq d+1\}$$
 are identically $r$-colored for any $t \in T_1$; that is
 $\chi((s,x_2,x_3,\dots,x_{d+1}) = \chi((t,x_2,x_3,\dots,x_{d+1})$ for any $s,t \in T_1$
 and all $x_i\in S_i$.  Let this $r$-coloring be $\tau$.
 Now consider $S_2 \times S_3 \times \cdots \times S_{d+1}$ under $\tau$.  By the inductive hypothesis and Statement (\ref{eqn1}), there exists a $\tau$-monochromatic $d$-dimensional $(m,p,c)$-set
 $T_2 \times T_3 \times \cdots \times T_{d+1}$.  Hence, under $\chi$, our original arbitrary $r$-coloring,
 we see that  $T_1 \times T_2\times \cdots \times T_{d+1}$ is a monochromatic $(d+1)$-dimensional $(m,p,c)$-set, completing
 the inductive step and the proof.
\end{proof}

\subsection{A Multidimensional Rado Theorem}

We now are in a situation where we can apply the following result
 due to Frankl, Graham, and R\"odl \cite{FGR} on the number
 of monochromatic solutions to Rado systems.  Their method is to choose
 $n$ much larger than $M$ so that there are many $(M,P,c^\mu)$-sets in
 $[1,n]$.  By considering generators $g_1, g_2,\dots, g_{M}$ with
$g_i \equiv (2P+1)^i \pmod{(2P+1)^M}$, they prove the following.
 
 \begin{theorem}[\cite{FGR}] \label{FGRThm} Let $A\mathbf{v}=\mathbf{0}$ be a linear  system with $A$ an $\ell \times k$ matrix of rank $\ell$ satisfying the columns condition.  For any $r \in \Z$, there exists a constant $c=c(A,r)>0$ such that any $r$-coloring of $[1,n]$
 contains at least $cn^{k-\ell}(1+o(1))$ monochromatic solutions.
 \end{theorem}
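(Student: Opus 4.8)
The plan is to derive the quantitative bound by placing many parametrized copies of a Deuber structure inside $[1,n]$ and then converting ``one monochromatic solution per copy'' into a count of \emph{distinct} solutions, using a multiplicity bound that the congruence condition on the generators is designed to supply. First I would invoke the link between the columns condition and Deuber's Theorem recorded after Theorem \ref{DeuberThm}. Since $A$ is $\ell\times k$ and satisfies the columns condition, fix $p,c\in\Z$ so that every $(k,p,c)$-set contains a solution to $A\mathbf{v}=\mathbf{0}$, and then fix $M,P,\mu$ with $(M,P,c^\mu)\vDash_r(k,p,c)$, so that every $r$-coloring of an $(M,P,c^\mu)$-set contains a monochromatic $(k,p,c)$-set and hence a monochromatic solution. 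Using Lemma \ref{MPCLemma}(2) I enlarge $P$ so that $c^\mu\le P$, a normalization needed below.

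Next I would generate copies. Choosing generators $g_1,\dots,g_M$ with $g_i\equiv(2P+1)^i\pmod{(2P+1)^M}$, rapidly decreasing so that the generated set consists of positive integers lying in $[1,n]$, produces $(1+o(1))\,c_1 n^{M}$ admissible tuples, each yielding at least one monochromatic solution; this gives $\Omega(n^M)$ incidences of the form (copy, monochromatic solution it contains). The congruence is precisely what turns these incidences into a count of distinct solutions. Every element of the set has the form $c^\mu g_i+\sum_{j>i}\lambda_j g_j$ with $|\lambda_j|\le P$, so its residue modulo $(2P+1)^M$ is a balanced base-$(2P+1)$ string with digit $c^\mu$ in position $i$ and digits $\lambda_j$ in positions $j>i$; since $\{-P,\dots,P\}$ is a complete residue system modulo $2P+1$ (here $c^\mu\le P$ is used), this representation is unique. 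Hence from the residues modulo $(2P+1)^M$ of the coordinates of a monochromatic solution $\mathbf{v}=(v_1,\dots,v_k)$ one reads off its \emph{combinatorial type}, namely which integer combination of generators each $v_s$ is, and there are only $O(1)$ types. Fixing a type writes $v_s=\sum_j\alpha^{(s)}_j g_j$ with known small integers $\alpha^{(s)}_j$; because the produced $\mathbf{v}$ solves $A\mathbf{v}=\mathbf{0}$ identically in the generators, the matrix $\alpha=[\alpha^{(s)}_j]$ obeys $A\alpha=0$, so its columns lie in $\ker A$ and $\operatorname{rank}\alpha\le k-\ell$.

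The decisive step, and the one I expect to be the main obstacle, is the multiplicity bound: for a given solution and type, the generator tuples producing it are the integer points of a box inside an affine subspace of dimension $M-\operatorname{rank}\alpha$, numbering $O(n^{M-\operatorname{rank}\alpha})$. This equals the desired $O(n^{M-(k-\ell)})$ only when $\operatorname{rank}\alpha=k-\ell$, so I must restrict attention to copies whose Deuber solution has a full-rank type---those whose solution sweeps out the entire $(k-\ell)$-dimensional solution variety of $A\mathbf{v}=\mathbf{0}$ as the generators vary. Showing that a positive fraction $\Omega(n^M)$ of the admissible tuples have full-rank type is where the fine structure of the columns-condition solution sitting inside the $(k,p,c)$-set must be analyzed, and is the part I expect to require the most care. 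Granting it, the $\Omega(n^M)$ incidences spread over $\Omega(n^{k-\ell})$ distinct monochromatic solutions, with the $(1+o(1))$ factor absorbing the boundary effects in the lattice-point counts.
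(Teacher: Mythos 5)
First, a point of comparison: the paper does not prove Theorem \ref{FGRThm} at all. It is quoted from Frankl, Graham, and R\"odl \cite{FGR}, and the only indication of method the paper gives is the sentence preceding the statement (many $(M,P,c^\mu)$-sets in $[1,n]$ with generators $g_i\equiv(2P+1)^i\pmod{(2P+1)^M}$). Your outline is consistent with that sentence, so as a reconstruction it begins in the right place: the copy-counting, the uniqueness of the balanced base-$(2P+1)$ representation, and the reduction of the multiplicity question to the rank of a type matrix $\alpha$ with $A\alpha=\mathbf{0}$ are all sound.

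The gap is exactly the step you name and then grant, and as formulated the claim you need is not merely hard but false for the solutions Deuber's machinery actually supplies. What Deuber's theorem produces inside each copy is a monochromatic $(m,p,c)$-set, and the solutions of $A\mathbf{v}=\mathbf{0}$ that the columns condition locates inside an $(m,p,c)$-set have the canonical staircase form $x_i=cg_j+\sum_{t>j}\lambda_{i,t}g_t$ for $i$ in the $j$-th block; the associated type matrix has rank exactly $m$, the number of blocks in the columns condition, and $m$ can be strictly smaller than $k-\ell$. The matrix $A=[2\ \ -1\ \ -1]$ (the equation $2x_1=x_2+x_3$) is a clean obstruction: the only admissible first block is $\{1,2,3\}$, so $m=1$ while $k-\ell=2$, and the canonical solution in a $(1,p,c)$-set is the constant one $x_1=x_2=x_3=cg_1$, of rank-one type. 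An adversarial coloring can arrange that these are the only solutions your incidence count ever registers, so restricting to ``copies whose solution has full-rank type'' may restrict to nothing, and the count then delivers only $\Omega(n^{m})$ distinct monochromatic solutions rather than $\Omega(n^{k-\ell})$ (for this equation the true $n^2$ bound already requires van der Waerden--Varnavides-type input, not just Deuber copies). Closing this requires a genuinely additional idea beyond the copy-and-multiplicity scheme; that is where the substance of \cite{FGR} lies, and the proposal as written does not reach the exponent $k-\ell$.
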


Applying Theorem \ref{FGRThm} to each coordinate of a $d$-dimensional $(M,P,c^\mu)$-set, we can
now state the following result.

\begin{theorem}\label{NumSolThm} Consider the linear vector  system $A\mathbf{v}=\mathbf{0}$ with $A$ an $\ell \times k$ scalar matrix of rank $\ell$ satisfying the columns condition.    Let $r \in \Z$. There exists a constant $c = c(A,d,r)>0$ such that
any $r$-coloring of $[1,n]^d$ admits at least $cn^{d(k-\ell)}(1+o(1))$ monochromatic solutions to $A\mathbf{v}=\mathbf{0}$.
\end{theorem}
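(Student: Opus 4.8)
The plan is to reproduce the Frankl--Graham--R\"odl counting scheme behind Theorem \ref{FGRThm}, but to carry it out simultaneously in all $d$ coordinates, using Theorem \ref{MDeuberThm} in place of the one-dimensional Deuber theorem so as to handle the fact that a coloring of $[1,n]^d$ couples the coordinates together.

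First I would fix the combinatorial parameters. Since $A$ satisfies the columns condition, by the applicability of Deuber's theorem recorded after Theorem \ref{DeuberThm} I can choose $m$ (together with the associated $p$ and $c$) so that every $(m,p,c)$-set contains a solution to $A\mathbf{v}=\mathbf{0}$. Theorem \ref{MDeuberThm} then yields $M,P,\mu$ such that every $r$-coloring of a $d$-dimensional $(M,P,c^\mu)$-set admits a monochromatic $d$-dimensional $(m,p,c)$-set $T_1\times\cdots\times T_d$. The key observation is that such a box already manufactures a monochromatic solution: because every point of $T_1\times\cdots\times T_d$ receives the same color, I may select the $i^{\mathrm{th}}$ coordinates vector to be any solution of $A\mathbf{v}=\mathbf{0}$ lying in $T_i$ (one exists by the choice of $m,p,c$), and the resulting $k$ points are monochromatic with every coordinates vector solving the system, that is, a monochromatic solution to the diagonal vector system.

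To turn existence into a count of order $n^{d(k-\ell)}$ I would run the FGR embedding in each coordinate. Taking $n\gg M$, in coordinate $i$ I form a large family $\mathcal F_i$ of $(M,P,c^\mu)$-sets inside $[1,n]$ using generators congruent to $(2P+1)^j$ modulo $(2P+1)^M$, exactly as in the proof of Theorem \ref{FGRThm}; every product $S_1\times\cdots\times S_d$ with $S_i\in\mathcal F_i$ is then a $d$-dimensional $(M,P,c^\mu)$-set and so, by the previous paragraph, contains a monochromatic solution. Since the one-dimensional scheme should produce $\Theta(n^{k-\ell})$ distinct solution vectors per coordinate and the product ranges independently over the $d$ coordinates, the count ought to factor and yield $\Theta(n^{d(k-\ell)})$ monochromatic solutions in all, matching up to a constant the total number of solutions to the diagonal system.

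The main obstacle I expect is the overcounting analysis that pins the exponent at exactly $d(k-\ell)$. I must verify that distinct product boxes yield distinct solutions and that the bounded multiplicities arising from the FGR argument in each coordinate multiply cleanly rather than collapsing the product; the congruence conditions on the generators are precisely what keep the elements of each $(M,P,c^\mu)$-set separated and make this bookkeeping tractable in one dimension, and I would need to confirm that forming products introduces no new collisions. Two smaller points to check are that Theorem \ref{MDeuberThm} is invoked with $m$ large enough to force a solution inside each monochromatic $(m,p,c)$-set, and that the finiteness of the box sizes guaranteed by Remark \ref{Rem12} keeps the induced colorings finite so that the reduction through Theorem \ref{MDeuberThm} remains legitimate.
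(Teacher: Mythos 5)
Your proposal is correct and follows essentially the same route as the paper, which in fact offers only the one-sentence justification ``apply Theorem \ref{FGRThm} to each coordinate of a $d$-dimensional $(M,P,c^\mu)$-set'': you use Theorem \ref{MDeuberThm} to reduce the coupled coloring of $[1,n]^d$ to a monochromatic product box, extract a solution coordinatewise, and run the Frankl--Graham--R\"odl counting in each coordinate so the exponents add to $d(k-\ell)$. The overcounting issue you flag (that the per-coordinate multiplicities multiply rather than collapse) is the one point the paper leaves entirely implicit, and your identification of it is apt.
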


\begin{remark}\label{LastRem}
For a linear  system $A\mathbf{v}=\mathbf{0}$ with $A$ an $\ell \times k$ matrix of full rank satisfying the columns condition 
there are $\Omega(n^{k-\ell})$ solutions to the system in $[1,n]$.  Since there are $n^d$ points in $[1,n]^d$, 
recalling Definition \ref{DegenDefn} we see that there are  
$O(n^d \cdot n^{k-\ell}) = O(n^{k -\ell+d})$ degenerate monochromatic solutions in any $r$-coloring of $[1,n]^d$.
\end{remark}

Our main result is next and shows that in most situations we can guarantee monochromatic solutions that are
not directly implied   by Rado's Theorem.

\begin{theorem}[A Multidimensional Rado Theorem]\label{MRado}  Let $r,d \in \mathbb{Z}^+$. Consider the linear vector system $A\mathbf{v}=\mathbf{0}$.  
If $A$ is an $\ell \times k$ matrix of rank $\ell \leq k-2 $ that satisfies the columns condition, then every $r$-coloring of $\Zd$ admits a
monochromatic non-degenerate solution.
\end{theorem}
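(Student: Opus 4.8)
The plan is to prove Theorem \ref{MRado} by a counting argument, pitting the guaranteed abundance of monochromatic solutions (Theorem \ref{NumSolThm}) against the scarcity of degenerate ones (Remark \ref{LastRem}). Fix an $r$-coloring $\chi$ of $\Zd$ and, for a parameter $n$ to be sent to infinity, restrict attention to the subcube $[1,n]^d$. By Theorem \ref{NumSolThm} there are at least $c\,n^{d(k-\ell)}(1+o(1))$ monochromatic solutions to $A\mathbf{v}=\mathbf{0}$ inside $[1,n]^d$, for some constant $c=c(A,d,r)>0$. It therefore suffices to show that, for all large $n$, this lower bound strictly exceeds the number of \emph{degenerate} monochromatic solutions in $[1,n]^d$; any surplus solution is then non-degenerate, and since it lives in $\Zd$ the theorem follows. (Note that for $d=1$ every finite point set is degenerate, so the statement is only of interest for $d\ge 2$, which I henceforth assume.)

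For the generic case I would invoke the crude bound of Remark \ref{LastRem}, namely $O(n^{k-\ell+d})$ degenerate monochromatic solutions. Writing $s=k-\ell$, domination of the degenerate count by the total count is guaranteed as soon as $ds>s+d$, i.e. $(d-1)(s-1)>1$. The hypothesis $\ell\le k-2$ forces $s\ge 2$, and with $d\ge 2$ we have $(d-1)(s-1)\ge 1$, with strict inequality whenever $d\ge 3$ or $s\ge 3$. In all of these cases $n^{d(k-\ell)}$ dominates $n^{k-\ell+d}$, so for $n$ sufficiently large a monochromatic non-degenerate solution must occur.

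The main obstacle is the single boundary case $d=2$, $\ell=k-2$ (so $s=2$), where the two exponents coincide at $ds=s+d=4$ and the crude bound of Remark \ref{LastRem} is too weak to separate the counts. Here I would sharpen the degenerate estimate. Every degenerate solution lies on a ray $\{i\mathbf{v}:i\in\Z\}$ through the origin with primitive direction $\mathbf{v}$, and on that ray it is a $1$-dimensional solution of the system scaled by $\mathbf{v}$. The number of lattice points of the ray lying in $[1,n]^d$ is $N_{\mathbf{v}}=\lfloor n/\|\mathbf{v}\|_\infty\rfloor$, and the number of $1$-dimensional solutions with entries in $[1,N_{\mathbf{v}}]$ is $O(N_{\mathbf{v}}^{\,s})$. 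Grouping directions by $\|\mathbf{v}\|_\infty=t$ — of which there are $O(t^{d-1})$ — the number of degenerate solutions is at most
\[
\sum_{t=1}^{n} O\!\left(t^{\,d-1}\right)\cdot O\!\left((n/t)^{s}\right)=O\!\left(n^{s}\sum_{t=1}^{n} t^{\,d-1-s}\right).
\]
For $d=s=2$ the inner sum is $O(\log n)$, giving $O(n^2\log n)=o(n^4)$ degenerate solutions, again dominated by the $\Omega(n^4)$ total from Theorem \ref{NumSolThm}. In fact this refined estimate yields $O(n^{\max(s,d)}\,\mathrm{polylog})$ in general, which is $o(n^{ds})$ for all $d,s\ge 2$ and so could replace the case split entirely. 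Carrying out this counting — in particular verifying the $O(t^{d-1})$ direction count and the $O(N^{s})$ solution count — is the one place requiring genuine care; everything else is immediate from the cited results.
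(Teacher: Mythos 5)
Your proposal is correct, and for the generic case it is the paper's proof verbatim: both compare the $\Omega(n^{d(k-\ell)})$ monochromatic solutions from Theorem \ref{NumSolThm} against the $O(n^{k-\ell+d})$ degenerate ones from Remark \ref{LastRem}, and both isolate $d=2$, $k-\ell=2$ as the one case where the exponents tie. Where you genuinely diverge is in that boundary case. The paper likewise shows that the total (not necessarily monochromatic) number of degenerate solutions is $o(n^4)$, but it parametrizes a degenerate solution by its two free points $(x_1,x_2)$ and $(y_1,y_2)$ and the reduced ratio $p/q$ relating them: fixing $q$ forces $q \mid y_1$ and $q \mid y_2$, giving at most $(n/q)^2 \cdot nq = n^3/q$ choices, and summing over $q$ yields $O(n^3\log n)=o(n^4)$. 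You instead parametrize by the primitive direction $\mathbf{v}$ of the ray carrying the solution, note that the degenerate solutions on that ray correspond exactly to one-dimensional solutions scaled by $\mathbf{v}$, of which there are $O\bigl((n/\|\mathbf{v}\|_\infty)^{k-\ell}\bigr)$, and sum over the $O(t^{d-1})$ directions with $\|\mathbf{v}\|_\infty=t$. Both of the counting steps you flag do check out: there are at most $t^d-(t-1)^d=O(t^{d-1})$ vectors in $\Zd$ with sup-norm $t$, and a rank-$\ell$ system in $k$ variables has at most $N^{k-\ell}$ solutions in $[1,N]^k$ because the $k-\ell$ free variables determine the rest. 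Your resulting bound $O(n^{\max(d,\,k-\ell)}\log n)$ is sharper than the paper's ($O(n^2\log n)$ versus $O(n^3\log n)$ when $d=k-\ell=2$) and, as you observe, strong enough to subsume the generic case and eliminate the case split altogether, since $\max(d,k-\ell)<d(k-\ell)$ whenever both are at least $2$. So your route is a uniform refinement of the paper's degenerate-solution count; the paper's version buys only a slightly more elementary boundary argument that avoids discussing lattice points on rays.
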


\begin{proof} Let $n \in \Z$ be sufficiently large.
From Remark \ref{LastRem} we see that for some $c_1>0$ there are at most $c_1n^{k-\ell+d}(1+o(1))$ monochromatic
degenerate solutions in any $r$-coloring of $[1,n]^d$.  From Theorem \ref{NumSolThm} we have a total of at
least $c_2n^{d(k-\ell)}(1+o(1))$ monochromatic solutions.  Hence, we have at least
$c_3n^{(d-1)(k-\ell)-d}(1+o(1))$ non-degenerate monochromatic solutions.  
By ensuring that the exponent is strictly greater than 0, we see that
for $d>2$, it suffices to have $k -\ell \geq 2$; however,
for $d=2$ the counting argument requires $k - \ell \geq 3$ to have non-degenerate monochromatic solutions.
Hence, to complete the proof, we will show via a different argument that for $d=2$ and $k-\ell=2$ we can also guarantee non-degenerate monochromatic solutions. 

For a $(k-2)\times k$ linear vector system in two dimensions, we will enumerate degenerate solutions.
From Theorem \ref{NumSolThm}, we know there are $c_1n^{4}(1+o(1))$ monochromatic solutions in
any $r$-coloring of $[1,n]^2$ for some $c_1>0$.  We will complete the argument by showing that there are $o(n^{4})$ (not necessarily
monochromatic)  degenerate solutions.

In a $(k-2)\times k$ linear system (in one dimension) of full rank, we have 2 free variables, from which the values of all other variables
are determined by the system.  Let $x$ and $y$ be the free variables (for the one dimension situation) and
note that when extended to more dimensions, all coordinates of the variable point corresponding to the free variable
are free.  Hence, we can consider $(x_1,x_2)$ and $(y_1,y_2)$ to be the free variables.

For a degenerate solution in two dimensions we have $(x_1,x_2) = \frac{p}{q}(y_1,y_2)$ for some $p,q \in \Z$, where
we may assume that $\gcd(p,q)=1$.
Let $q$ be fixed.  Then both $y_1$ and $y_2$ must be divisible by $q$.
Hence, there are at most $\left(\frac{n}{q}\right)^2$ choices for $(y_1,y_2)$.
Now, we require $p \leq nq$ to ensure that $x_1,x_2 \leq n$. Thus, for each $q$, we have at most $\frac{n^3}{q}$ possibilities
for $(x_1,x_2)=\frac{p}{q}(y_1,y_2)$.  Summing over all possible values of $q$ gives
at most
$$
\sum_{q=1}^n \frac{n^3}{q} = n^3\sum_{q=1}^n \frac{1}{q} < 2n^3 \log n
$$
possibilities for $(x_1,x_2)$ and $(y_1,y_2)$.  Since the values of the
other variables are determined by the free variables' values, we find that, since $2n^3 \log n = o(n^4)$, there must be a non-degenerate monochromatic solution 
for $d=2$ and $k-\ell=2$.
\end{proof}

\subsection{More Flexible Multidimensional Deuber and Rado Theorems} 

Recall that for $\mathbf{v},\mathbf{w} \in \Zd$, we defined
 $\mathbf{v} \circ \mathbf{w}=(v_1w_1,v_2w_2,\dots,v_dw_d)$ and consider
 $$
 \begin{bmatrix}
 1\\
 1\\
 \end{bmatrix}
 \circ
  \begin{bmatrix}
 x_1\\
 x_2\\
 \end{bmatrix}
 +
  \begin{bmatrix}
 1\\
 1\\
 \end{bmatrix}
 \circ
  \begin{bmatrix}
y_1\\
y_2\\
 \end{bmatrix}
 =
   \begin{bmatrix}
 1\\
2\\
 \end{bmatrix}
 \circ
  \begin{bmatrix}
z_1\\
z_2\\
 \end{bmatrix}.
 $$
In this vector equation we seek points $(x_1,x_2), (y_1,y_2)$, and $(z_1,z_2)$ of the same color
such that $x_1, y_1, z_1$ is a Schur triple $(x_1+y_1=z_1$) while $x_2,z_2,y_2$ is a 3-term arithmetic progression.
Can we guarantee that this occurs?

In this subsection we show that multidimensional
$(m,p,c)$-sets offer more flexibility than what is stated in Theorem \ref{MRado}.
In particular, we will show that the coordinates   can be required to
satisfy separate systems.  We need a definition with some notation to be more explicit in this situation.

\begin{definition}[$(\mathbf{m},\mathbf{p},\mathbf{c})$-set] Let $d \in \Z$ and let $\mathbf{m},\mathbf{p},\mathbf{c} \in \Zd$.
We say that $S_1 \times S_2 \times \cdots \times S_d$ is an {\it $(\mathbf{m},\mathbf{p},\mathbf{c})$-set} if
$S_j$ is an $(m_j,p_j,c_j)$-set for every $j\in[1,d]$.
\end{definition}

Following the proof of Theorem \ref{MDeuberThm}, the following generalization can be shown.

\begin{theorem}\label{OffDiagDeub} Let $d,r \in \Z$ and let $\mathbf{m},\mathbf{p},\mathbf{c} \in \Zd$.  There exists $\mathbf{M},\mathbf{P},\mathbf{C} \in \Zd$ such
that every $r$-coloring  of any $(\mathbf{M},\mathbf{P},\mathbf{C} )$-set admits
a monochromatic  $(\mathbf{m},\mathbf{p},\mathbf{c})$-set.
\end{theorem}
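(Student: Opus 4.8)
The plan is to mirror the inductive proof of Theorem \ref{MDeuberThm}, running the induction on the dimension $d$ but now bookkeeping a separate triple $(m_j,p_j,c_j)$ in each coordinate rather than a common $(m,p,c)$. The base case $d=1$ is again just Deuber's Theorem (Theorem \ref{DeuberThm}), which supplies $M,P,\mu$ with $(M,P,c_1^{\mu}) \vDash_r (m_1,p_1,c_1)$. For the inductive step I would assume the statement in dimension $d$ for every choice of parameter vectors in $\Zd$ and deduce it in dimension $d+1$.

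For the inductive step I would peel off the first coordinate. Applying the inductive hypothesis to the $d$-dimensional target with parameter vectors $\mathbf{m}'=(m_2,\dots,m_{d+1})$, $\mathbf{p}'=(p_2,\dots,p_{d+1})$, $\mathbf{c}'=(c_2,\dots,c_{d+1})$ yields realizing vectors $\mathbf{M}',\mathbf{P}',\mathbf{C}'\in\Zd$. Generalizing Remark \ref{Rem12}, a $(\mathbf{M}',\mathbf{P}',\mathbf{C}')$-set has at most $N=\prod_{j=1}^{d}(2P'_j+1)^{M'_j}$ elements, and I would set $r'=r^{N}$. Deuber's Theorem applied to the first coordinate with $r'$ colors then supplies $M_1,P_1,\mu_1$ with $(M_1,P_1,c_1^{\mu_1})\vDash_{r'}(m_1,p_1,c_1)$. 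The claim is that $\mathbf{M}=(M_1,\mathbf{M}')$, $\mathbf{P}=(P_1,\mathbf{P}')$, and $\mathbf{C}=(c_1^{\mu_1},\mathbf{C}')$ realize the $(d+1)$-dimensional target.

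To verify the claim, given an arbitrary $r$-coloring $\chi$ of an $(\mathbf{M},\mathbf{P},\mathbf{C})$-set $S_1\times\cdots\times S_{d+1}$, I would induce an $r'$-coloring $\gamma$ of $S_1$ exactly as in the proof of Theorem \ref{MDeuberThm}: color $s\in S_1$ by the entire pattern that $\chi$ assigns to the slice $\{s\}\times S_2\times\cdots\times S_{d+1}$. The choice $r'=r^{N}$ makes $\gamma$ well-defined. Deuber on $S_1$ produces a $\gamma$-monochromatic $(m_1,p_1,c_1)$-set $T_1$, which forces the slice-coloring to agree, as a single $r$-coloring $\tau$ of $S_2\times\cdots\times S_{d+1}$, across all points of $T_1$. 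The inductive hypothesis applied to $\tau$ then delivers a $\tau$-monochromatic $d$-dimensional $(\mathbf{m}',\mathbf{p}',\mathbf{c}')$-set $T_2\times\cdots\times T_{d+1}$, whereupon $T_1\times\cdots\times T_{d+1}$ is the desired $\chi$-monochromatic $(\mathbf{m},\mathbf{p},\mathbf{c})$-set.

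I do not expect a serious obstacle, since this is essentially the same product-coloring induction as Theorem \ref{MDeuberThm}. The one genuine simplification — and the place where one must resist overcomplicating — is that an $(\mathbf{M},\mathbf{P},\mathbf{C})$-set permits a different triple in each coordinate, so there is no longer any need to invoke Lemma \ref{MPCLemma} to coerce the dimensions onto a common base $c^{t}$; each coordinate's realizing triple is used as is. The only points requiring care are the count $N$ and the consequent well-definedness of the induced coloring $\gamma$.
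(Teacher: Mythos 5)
Your proposal is correct and follows essentially the same route as the paper: induction on $d$, a product coloring of the first coordinate by slice-patterns with $r'=r^{N}$ colors, Deuber's Theorem on that coordinate, and the inductive hypothesis on the remaining $d$ coordinates. Your observation that Lemma \ref{MPCLemma} is no longer needed to force a common base $c^{t}$ is also accurate and matches the paper, whose proof of this theorem only invokes that lemma for the (inessential) downgrade from $r'$ to $r$ colors.
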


\begin{proof} Using the notation from the proof of Theorem \ref{MDeuberThm}, we offer a terse proof as
it is essentially the same proof. We let
${\mathcal D}_d(\mathbf{m},\mathbf{p},\mathbf{c};r)$ represent the statement
$$
\begin{array}{rl}
{\mathcal D}_d(\mathbf{m},\mathbf{p},\mathbf{c};r): &\mbox{ there exists $\mathbf{M},\mathbf{P},\mathbf{C}  \in\Zd$ so that every $r$-coloring of a\, }\\
&\mbox{ ($d$-dimensional) $(\mathbf{M},\mathbf{P},\mathbf{C} )$-set admits a monochromatic}\\
& \mbox{ ($d$-dimensional) $(\mathbf{m},\mathbf{p},\mathbf{c})$-set}.
\end{array}
$$

Using induction on $d$, with $d=1$ being Deuber's Theorem, we will use 
$$
(\mathbf{M}(d;r),\mathbf{P}(d;r),\mathbf{C}(d;r)) \vDash {\mathcal D}_d(\mathbf{m},\mathbf{p},\mathbf{c};r)
$$
Let $\chi$ be an $r$-coloring of an arbitrary $(d+1)$-dimensional $(\mathbf{M'},\mathbf{P'},\mathbf{C'};r)$-set,
say $S_1 \times S_2 \times \cdots \times S_{d+1}$, where $\mathbf{M'},\mathbf{P'}$, and $\mathbf{C'}$ are
given below.

Since there are at most $e=\prod_{i=1}^d (2P_i(r)+1)^{M_i(r)}$ elements in an
$(\mathbf{M}(d;r), \mathbf{P}(d;r),\break \mathbf{C}(d;r))$-set, let $r'=r^e$
so that
$
\left(M(1;r'), P(1;r'), C(1;r')\right) \vDash {\mathcal D}_1  (m_1,p_1,c_1; r').
$
By Lemma \ref{MPCLemma}, since $r'>r$, we have $\left(M(1;r'), P(1;r'), C(1;r')\right) \vDash {\mathcal D}_1  (m_1,p_1,c_1; r)$.
Let $\mathbf{M'} = M(1;r') \times \mathbf{M}(d;r)$, $\mathbf{P'} = P(1;r') \times \mathbf{P}(d;r)$, and
$\mathbf{C'} = C(1;r') \times \mathbf{C}(d;r)$.

Using the $r'$-coloring $\gamma$ from the proof of  Theorem \ref{MDeuberThm} and the argument therein completes the inductive step and proof.
\end{proof}

We can consider Theorem \ref{OffDiagDeub} to be an off-diagonal multidimensional Deuber's Theorem.
This opens up monochromatic sets of points in $\Zd$ with interesting mixed Rado properties:
By applying Theorems \ref{FGRThm} and \ref{OffDiagDeub} we get the following result.

\begin{theorem}[An Off-Diagonal Multidimensional Rado Theorem]\label{OffDiagMRado} Let $d,r \in \Z$.  For $1 \leq j \leq d$, let $A_j$ be an $\ell_j \times k$ matrix of rank $\ell_j$ satisfying the columns condition.    There exists a constant $c = c(A_1,\dots,A_d,d,r)>0$ such that
any $r$-coloring of $[1,n]^d$ admits at least $cn^{\sum_{i=1}^d(k-\ell_i)}(1+o(1))$ monochromatic sets of $k$ points
$\mathbf{p}_i$  such that row $\mathbf{r}_j$ of the $d \times k$ matrix $B=[\mathbf{p}_1\,\mathbf{p}_2\,\dots, \mathbf{p}_k]$
satisfies
$
A_j \mathbf{r}_j^\intercal = \mathbf{0}
$
for each $j \in [1,d]$; that is, the $j^{\mathit{th}}$ coordinates vector of
$B$ satisfies the $j^{\mathit{th}}$ system.
\end{theorem}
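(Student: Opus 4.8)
The plan is to combine the off-diagonal multidimensional Deuber theorem (Theorem~\ref{OffDiagDeub}) with the Frankl--Graham--R\"odl counting engine (Theorem~\ref{FGRThm}), following the same template that yields the diagonal count in Theorem~\ref{NumSolThm}. First I would invoke the applicability of Deuber's theorem recorded after Theorem~\ref{DeuberThm}: for each $j \in [1,d]$, since $A_j$ is an $\ell_j \times k$ matrix satisfying the columns condition, there exist $p_j, c_j \in \mathbb{Z}^+$ so that every $(k,p_j,c_j)$-set contains a solution to $A_j\mathbf{v} = \mathbf{0}$. Setting $\mathbf{m} = (k,k,\dots,k)$, $\mathbf{p} = (p_1,\dots,p_d)$, and $\mathbf{c} = (c_1,\dots,c_d)$, Theorem~\ref{OffDiagDeub} supplies parameters $\mathbf{M},\mathbf{P},\mathbf{C} \in \Zd$ such that every $r$-coloring of any $(\mathbf{M},\mathbf{P},\mathbf{C})$-set admits a monochromatic $(\mathbf{m},\mathbf{p},\mathbf{c})$-set $T_1 \times T_2 \times \cdots \times T_d$. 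The key observation is that any such monochromatic product already contains a monochromatic off-diagonal solution: each factor $T_j$ is a $(k,p_j,c_j)$-set, hence contains $t_{j,1},\dots,t_{j,k} \in T_j$ with $A_j (t_{j,1},\dots,t_{j,k})^\intercal = \mathbf{0}$, and stacking coordinates yields points $\mathbf{p}_i = (t_{1,i},\dots,t_{d,i})^\intercal$, $1 \leq i \leq k$, all lying in $T_1 \times \cdots \times T_d$ (so monochromatic) whose $j^{\mathit{th}}$ coordinates vector is exactly $(t_{j,1},\dots,t_{j,k})$ and therefore satisfies $A_j \mathbf{r}_j^\intercal = \mathbf{0}$.

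To upgrade this qualitative existence to the stated count, I would import the mechanism of Theorem~\ref{FGRThm} coordinate by coordinate, as in Theorem~\ref{NumSolThm}. One realizes copies of $(\mathbf{M},\mathbf{P},\mathbf{C})$-sets inside $[1,n]^d$ by choosing, in each coordinate $j$, generators $g_{j,1},\dots,g_{j,M_j} \in [1,n]$ obeying the Frankl--Graham--R\"odl congruence $g_{j,i} \equiv (2P_j+1)^i \pmod{(2P_j+1)^{M_j}}$; the product of the resulting one-dimensional $(M_j,P_j,C_j)$-sets is a $(\mathbf{M},\mathbf{P},\mathbf{C})$-set, and by the previous paragraph each copy carries a monochromatic off-diagonal solution. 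Since the congruence condition lets one recover the relevant generating data in each coordinate from the solution it produces, distinct solutions are not overcounted, and the number of distinct monochromatic solutions harvested from coordinate $j$ is bounded below by the FGR count $\kappa_j\, n^{k-\ell_j}(1+o(1))$ associated to the system $A_j$. As the coordinates range independently over $[1,n]^d$, these per-coordinate counts multiply, producing at least $c\,n^{\sum_{i=1}^d (k-\ell_i)}(1+o(1))$ monochromatic off-diagonal solutions, with $c = \prod_{j} \kappa_j > 0$ depending only on $A_1,\dots,A_d,d,r$.

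The main obstacle is precisely this product counting: the total number of (not-necessarily-monochromatic) solutions in $[1,n]^d$ is $\Theta(n^{\sum_j(k-\ell_j)})$, because the $j^{\mathit{th}}$ coordinates vector ranges over the $(k-\ell_j)$-dimensional solution variety of $A_j$, and the content of Theorem~\ref{FGRThm} is that a positive proportion of these persist monochromatically. The delicate point is that the congruence-structured generators must guarantee this proportion survives the \emph{simultaneous}, product-wise application forced by the off-diagonal Deuber structure, rather than inflating the exponent to $\sum_j M_j$ or collapsing it through repeated solutions. Verifying that the FGR distinctness bookkeeping transfers intact to each factor of $[1,n]^d$ while the monochromaticity remains coupled across all $d$ coordinates is exactly where the care is needed, and it is carried out just as in the proof of Theorem~\ref{NumSolThm}.
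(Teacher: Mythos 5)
Your proposal follows exactly the route the paper takes: the paper gives no detailed proof of Theorem~\ref{OffDiagMRado}, deriving it in one line by combining Theorem~\ref{OffDiagDeub} (to place a monochromatic $(\mathbf{m},\mathbf{p},\mathbf{c})$-set, each factor of which contains a solution to the corresponding $A_j$) with the Frankl--Graham--R\"odl counting mechanism of Theorem~\ref{FGRThm} applied coordinatewise, just as in Theorem~\ref{NumSolThm}. Your write-up correctly identifies both ingredients and the point where care is needed in the counting, so it is essentially the paper's argument spelled out.
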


\begin{remark}
Systems with different numbers of variables can all be made to have $k$ variables, for some $k$, by introducing dummy variables.
\end{remark}

Note that degenerate solutions no longer make sense if the systems are not equivalent, but we can now consider coloring
the points in multidimensional space and guaranteeing the existence of many interesting monochromatic configurations, like
in the next example.

\vskip 5pt
\noindent
{\bf Example.} For any $r$-coloring of $\mathbb{Z}^3$ there exist
$a,d,x_1,x_2,x_3,y_1, y_2, y_3 \in \mathbb{Z}^+$
such that the points
$$
\begin{pmatrix}
x_1\\
y_1\\
a\\
\end{pmatrix},
\begin{pmatrix}
x_2\\
y_2\\
a+d\\
\end{pmatrix},
\begin{pmatrix}
x_3\\
y_3\\
a+2d\\
\end{pmatrix},
\begin{pmatrix}
x_1+x_2+x_3\\
y_3+y_2-y_1\\
a+3d\\
\end{pmatrix}.
$$
are monochromatic.  Note that the first coordinates vector satisfies the generalized Schur equation,
the second coordinates vector satisfies $u-v = w-x$, and the third coordinates vector
are a 4-term arithmetic progression.

\vskip 5pt

By applying Theorem \ref{OffDiagMRado} we can now answer, in particular, our motivating question at the beginning of
this article about Schur triples and 3-term arithmetic progressions in the affirmative.  In fact, a simple
backtracking computer calculation (or tedious pencil and paper work)
provides us with the following result.

 \begin{theorem}
The minimum integer $n$ such that for every $2$-coloring of
$[1,n] \times [1,n]$ there exist $a,b,x,d \in \mathbb{Z}^+$ with $(a,x), (b,x+d),$ and $(a+b,x+2d)$  
all   the same color, is $n=9$.
 \end{theorem}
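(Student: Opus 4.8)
The plan is to prove the two inequalities $n \ge 9$ and $n \le 9$ separately, as is standard for a sharp threshold, after first recasting the whole problem as a finite hypergraph $2$-colorability question. For a grid size $N$ I would fix the explicit list of \emph{forbidden configurations}, namely the triples of cells
$$
\bigl\{\,(a,x),\ (b,x+d),\ (a+b,x+2d)\,\bigr\}
$$
with $a,b,x,d\in\mathbb{Z}^+$, $a+b\le N$, and $x+2d\le N$ (the remaining requirements $a,b,x\le N$ and $x+d\le N$ then hold automatically). When $N=9$ there are $36$ admissible pairs $(a,b)$ with $a+b\le 9$ and $16$ admissible pairs $(x,d)$ with $x+2d\le 9$, so at most $576$ such triples; each one imposes the single constraint that its three cells not all receive the same color. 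Thus a coloring avoids the configuration precisely when it is a proper $2$-coloring of the $3$-uniform hypergraph on the $81$ vertices of $[1,9]^2$ whose edges are these triples, and the claim $n=9$ is exactly the statement that this hypergraph is not $2$-colorable while its $N=8$ analogue (on $[1,8]^2$) is.

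For the lower bound $n\ge 9$ I would exhibit one explicit $2$-coloring of $[1,8]^2$ meeting none of the (analogously defined) forbidden triples, and then certify it by checking that coloring against each triple in turn. Finding the coloring is itself a finite search, but once produced it is self-certifying: the verification is a routine finite check that can be recorded as an $8\times 8$ table, establishing that $N=8$ admits a rainbow-free coloring and hence $n>8$.

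For the upper bound $n\le 9$ I would show that \emph{no} $2$-coloring of $[1,9]^2$ avoids all the triples. A naive scan over the $2^{81}$ colorings is hopeless, so the work is a backtracking search with constraint propagation: one assigns colors to cells one at a time, and whenever two cells of some triple already share a color the third cell is forced to the opposite color, which prunes the search tree sharply. The color-swap symmetry lets me fix the color of a reference cell at the outset, halving the cases. Carried to completion with no surviving leaf, this certifies non-$2$-colorability, hence $n\le 9$.

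The main obstacle is precisely this upper-bound exhaustion, where the combinatorial explosion lives; a purely hand-written argument would have to chain the forced implications from a few well-chosen seed cells until a monochromatic triple becomes unavoidable, which is tedious to present cleanly. The reliable route is the backtracking computation indicated in the statement, with the forbidden-triple hypergraph together with the infeasibility trace (and, dually, the explicit good coloring of $[1,8]^2$) serving as the verifiable record that pins the answer to exactly $n=9$.
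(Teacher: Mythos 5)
Your proposal matches the paper's approach: the paper likewise establishes $n=9$ by ``a simple backtracking computer calculation (or tedious pencil and paper work),'' i.e.\ an exhaustive search certifying that $[1,9]^2$ forces a monochromatic configuration while $[1,8]^2$ admits a valid $2$-coloring. Your hypergraph reformulation and symmetry reduction are a reasonable elaboration of the same finite check, though (like the paper) you stop short of exhibiting the explicit witness coloring of $[1,8]^2$.
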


\vskip 20pt
\noindent
{\bf Acknowledgement.} The author thanks Tom Brown, Bruce Landman, and Neil Hindman for helpful comments in earlier drafts.

\bibliographystyle{amsplain}

\begin{aicauthors}
\begin{authorinfo}[AR]
Aaron Robertson\\
Department of Mathematics\\
Colgate University\\
Hamilton, New York\\
arobertson@colgate.edu\\
{\tt http://math.colgate.edu/$\sim$aaron}
\end{authorinfo}
\end{aicauthors}

\end{document}